\numberwithin{equation}{section}
\def\@settitle{\begin{center}\baselineskip14\p@\relax\bfseries{\large\@title}\thispagestyle{empty}\end{center}}
\def\@setauthors{%
  \begingroup
  \def\thanks{\protect\thanks@warning}%
  \trivlist
  \centering\footnotesize \@topsep30\p@\relax
  \advance\@topsep by -\baselineskip
  \item\relax
  \author@andify\authors
  \def\\{\protect\linebreak}%
  {\authors}%
  \ifx\@empty\contribs
  \else
    ,\penalty-3 \space \@setcontribs
    \@closetoccontribs
  \fi
  \endtrivlist
  \endgroup
}
\def\maketitle{\par
  \@topnum\z@ % this prevents figures from falling at the top of page 1
  \@setcopyright
  \thispagestyle{firstpage}% this sets first page specifications
  %\uppercasenonmath\shorttitle% Make shorttitle non-uppercase mode
  \ifx\@empty\shortauthors \let\shortauthors\shorttitle
  \else \andify\shortauthors
  \fi
  \@maketitle@hook
  \begingroup
  \@maketitle
  \toks@\@xp{\shortauthors}\@temptokena\@xp{\shorttitle}%
  \toks4{\def\\{ \ignorespaces}}% defend against questionable usage
  \edef\@tempa{%
    \@nx\markboth{\the\toks4
      \@nx%\MakeUppercase
      {\the\toks@}}{\the\@temptokena}}%
  \@tempa
  \endgroup
  \c@footnote\z@
  \@cleartopmattertags
}
\newtheorem{Proposition}{Proposition}[section]
\newtheorem{Theorem}[Proposition]{Theorem}
\newtheorem{Lemma}[Proposition]{Lemma}
\newcommand{\ds}{\displaystyle}
\newcommand{\N}{\mathbb N}
\newcommand{\norm}[1]{ \lVert #1 \rVert }
\newcommand{\ulphi}{\underline{\varphi}}
\newcommand{\ul}[1]{\underline{#1}}
\newcommand{\ol}[1]{\overline{#1}}
\newcommand{\R}{\mathbb R}
\newcommand{\innV}[2]{\left\langle \mskip2.0 mu #1 \mskip2.0 mu | 	\mskip2.0 mu #2 \mskip2.0 mu \right\rangle}
\newcommand{\tm}{\triangle}
\newcommand{\Clanprod}[2]{#1 \,\triangle\, #2}
\newcommand{\ce}{c_{\ue}}
\newcommand{\ue}{\boldsymbol{\varepsilon}}
\newcommand{\pmat}[1]{\begin{pmatrix} #1 \end{pmatrix}}
\newcommand{\set}[2]{\left\{#1 ;\; #2 \right\}}
\newcommand{\cep}{c_{\ue'}}
\newcommand{\dtm}{\raisebox{1pt}{\rotatebox[origin=c]{180}{$\triangle$}}}
\newcommand{\dClanprod}[2]{#1\,\dtm\, #2}
\newcommand{\Sym}{\mathrm{Sym}}
\newcommand{\bs}[1]{\boldsymbol{#1}}
\newcommand{\bl}[1]{\textcolor{blue}{#1}}
\begin{document}

\title[Algebraic proof of explicit formulas of basic relative invariants]{
Algebraic proof of explicit formulas of basic relative invariants of homogeneous cones
}
\author[H. Nakashima]{Hideto NAKASHIMA}
\address{Graduate School of Mathematics, Nagoya University, Furo-cho, Chikusa-ku, Nagoya, Japan, 464--8602}
\email{h-nakashima@math.nagoya-u.ac.jp}
\keywords{Homogeneous cones, Basic relative invariants, Vinberg algebras}
\subjclass[2010]{Primary 17D99; Secondary 22E25, 22F30, 11S90}
% 17D99 Others in Nonassociative rings and algebras
% 22E25	Nilpotent and solvable Lie groups
% 22F30 Homogeneous spaces
% 11S90 Prehomogeneous spaces
\theoremstyle{plain}
%\vspace{-1.5cm}
\begin{abstract}
The aim of this paper is to give another proof 
to a result on the image of a homogeneous quadratic map which is positive with respect to a homogeneous cone,
given by Graczyk and Ishi in 2014.
The new proof depends on a purely algebraic method,
whereas the original depends on analytic arguments.
This enables us to give
explicit formulas of the basic relative invariants of homogeneous cones, obtained by the previous paper [J.\ Lie Theory \textbf{24} (2014), 1013--1032]
without analytic arguments.
\end{abstract}
\maketitle
%%%%%%%%%%%%%%%%%%%%%%%%%%%%%%%%%%%%%%%%%%%%%%%%%%%%%%%
\section*{Introduction}

This paper is a continuation of our studies of
homogeneous open convex cones containing no entire line
(homogeneous cones for short in what follows)
in \cite{NN2013, N2014, N2018, N2020}.
In the study of homogeneous cones,
the basic relative invariants are fundamental objects
from analytic and algebraic points of view
(see \cite{Ishi1999,FK1994,N2020}, for example),
and explicit formulas of them are given in the previous paper~\cite{N2014}.
The proof there depends on a result obtained from 
Graczyk--Ishi~\cite{GI2014} and Ishi~\cite{Ishi2000},
which requires analytic arguments essentially for the proof.
It is however desirable to prove it without analytic arguments
because homogeneous cones provide many examples of non-reductive prehomogeneous vector spaces which can be defined over not only the real field but also general algebraic number fields 
(see~\cite{Kimura} for prehomogeneous vector spaces).
In this paper,
we give an algebraic proof to a part of the results in~\cite{GI2014}
which is sufficient to obtain explicit formulas of the basic relative invariants.

Let us describe the contents of this paper in more detail.
Let $V$ be a finite dimensional real vector space
with suitable inner product $\innV{\cdot}{\cdot}_V$
and $\Omega\subset V$ an open convex cone containing no entire line.
In what follows,
we always assume that $\Omega$ is homogeneous, that is,
the group $GL(\Omega):=\set{g\in GL(V)}{g\Omega=\Omega}$ acts on $\Omega$ transitively.
Vinberg~\cite{Vinberg} tells us that
there exists a split solvable Lie subgroup $H$ of $GL(\Omega)$
acting on $\Omega$ simply transitively.
In this paper,
we fix such an $H$,
and the action of $H$ on $V$ will be denoted by $\rho$.
We decompose $V$ as
$V=\bigoplus_{1\le j\le k\le r}V_{kj}$,
which is the normal decomposition of $V$
with respect to a complete orthogonal system $c_1,\dots,c_r$ of primitive idempotents
(see \eqref{eq:normal_decomp}).
%In the previous paper~\cite{N2014},
%we obtain an explicit formula of them. 
%In the proof,
%we use a result of quadratic maps in~\cite{GI2014} below.
%We introduce some more terminologies in order to state the result of \cite{GI2014} and \cite{Ishi2000}.
Let $E$ be another finite dimensional real vector space
with inner product $\innV{\cdot}{\cdot}_E$,
and let $Q$ be a quadratic map on $E$ into $V$.
We say that $Q$ is $\Omega$-positive
if $Q[\xi]\in\overline{\Omega}\setminus\{0\}$ $(\xi\in E)$ whenever $\xi\ne 0$,
and that $Q$ is homogeneous
if for any $g\in GL(\Omega)$ there exists $A_g\in GL(E)$ such that
$g\bigl(Q[\xi]\bigr)=Q[A_g\xi]$ for all $\xi \in E$.
We extend $Q$ to the bilinear map by polarization in \eqref{def:polarization}, and use the same symbol $Q$.
By using inner products of $V$ and $E$,
we introduce a linear map $\varphi\colon V\to \mathrm{Sym}(E)$ by
\[
\innV{\varphi(x)\xi}{\eta}_E=\innV{Q(\xi,\eta)}{x}_V\quad
(x\in V,\ \xi,\eta\in E).
\]
%Let $c_1,\dots,c_r$ be a complete orthogonal set of primitive idempotents (see~\eqref{eq:normal_decomp}).
%The proof given in~\cite{N2014} depends on the following theorem.
The following theorem is what we need for the proof of explicit formulas of the basic relative invariants.

\renewcommand{\theProposition}{\Alph{Proposition}}
\begin{Theorem}[{Graczyk--Ishi~\cite{GI2014}, Ishi~\cite{Ishi2000}}]
\label{theo:GI2014}
For any $\Omega$-positive homogeneous quadratic map $Q$ on $E$,
there exists a unique $\ue\in\{0,1\}^r$ such that
the image $Q[E]$ of $Q$ is described as a closure of an $H$-orbit through $\ce=\varepsilon_1c_1+\cdots+\varepsilon_rc_r$, that is, one has
\[
Q[E]=\overline{\rho(H)\ce}.
\]
The vector $\bs{\varepsilon}=(\varepsilon_1,\dots,\varepsilon_r)$
is determined from $\dim\varphi(c_i)E$ $(i=1,\dots,r)$ and $\dim V_{kj}$
$(1\le j<k\le r)$.
\end{Theorem}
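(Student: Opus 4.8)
The plan is to identify $Q[E]$ as a closed cone stable under $\rho(H)$, to use the decomposition of $\overline{\Omega}$ into $H$-orbits through the points $\ce$, and then to single out the correct $\ue$ by computing the ranks $\dim\varphi(c_i)E$ inside the Vinberg algebra. First I would record the structural properties of $Q[E]$. Since $Q[t\xi]=t^2Q[\xi]$, the image is a cone, and since the restriction of $Q$ to the unit sphere of $E$ has compact image, $Q[E]$ is closed. Homogeneity supplies, for each $h\in H$, an element $A_h\in GL(E)$ with $\rho(h)Q[\xi]=Q[A_h\xi]$; as $A_h$ is invertible this gives $\rho(h)Q[E]=Q[E]$, so $Q[E]$ is a $\rho(H)$-stable closed cone, and by $\Omega$-positivity it is contained in $\overline{\Omega}$ with $Q[\xi]\neq 0$ for $\xi\neq 0$.

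Next I would invoke the orbit stratification from Vinberg's theory: $\overline{\Omega}=\bigsqcup_{\boldsymbol{\delta}\in\{0,1\}^r}\rho(H)c_{\boldsymbol{\delta}}$, where each orbit $\rho(H)c_{\boldsymbol{\delta}}$ is the relative interior of the face $\mathcal{F}_{\boldsymbol{\delta}}:=\overline{\rho(H)c_{\boldsymbol{\delta}}}$, and the face order agrees with the coordinatewise order on $\{0,1\}^r$. Being $\rho(H)$-stable, $Q[E]$ is a union of such orbits, and being closed it is a union of faces $\mathcal{F}_{\boldsymbol{\delta}}$; equivalently $Q[E]=\bigsqcup_{\boldsymbol{\delta}\in S}\rho(H)c_{\boldsymbol{\delta}}$ for a coordinatewise down-closed set $S\subseteq\{0,1\}^r$. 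As a first check, since $\Omega=\rho(H)c_{\mathbf{1}}$ is a single orbit, $Q[E]\cap\Omega$ is either empty or all of $\Omega$, the latter forcing $Q[E]=\overline{\Omega}$ with $\ue=\mathbf{1}$.

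The heart of the argument is to show that $S$ has a greatest element $\ue$ and to compute it. Here I would use the defining relation $\innV{\varphi(c_i)\xi}{\eta}_E=\innV{Q(\xi,\eta)}{c_i}_V$, which identifies $d_i:=\dim\varphi(c_i)E$ with the rank of the symmetric form $(\xi,\eta)\mapsto\innV{Q(\xi,\eta)}{c_i}_V$. The two inclusions then read as follows. For $Q[E]\subseteq\mathcal{F}_{\ue}$, I would show that the face to which $Q[\xi]$ belongs is detected by which of these $c_i$-forms are nonzero on $\xi$, so that every $Q[\xi]$ has ``diagonal pattern'' dominated by a fixed $\ue$. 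For the reverse inclusion, a generic $\xi$ should land $Q[\xi]$ in the open orbit $\rho(H)\ce$ of that face; $\rho(H)$-stability then sweeps out the whole orbit, and closedness upgrades this to $\overline{\rho(H)\ce}$. Finally, expressing a model point of $\rho(H)\ce$ in the normal decomposition and tracking which blocks $V_{kj}$ survive when paired with $c_i$ yields each $d_i$ as an explicit function of the $\varepsilon_j$ and the integers $n_{kj}:=\dim V_{kj}$; inverting this function recovers $\ue$ from the data $\dim\varphi(c_i)E$ and $\dim V_{kj}$ and forces uniqueness.

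I expect the main obstacle to be precisely this last, purely algebraic bookkeeping, since it is the step that must replace the analytic arguments of Graczyk--Ishi. The delicate points are to compute the ranks $d_i$ for the model point $\ce$ from the Vinberg-algebra product alone---controlling how $\rho(H)$ spreads $\ce$ across the off-diagonal blocks $V_{kj}$ and how $\varphi$ registers this---and to verify that the resulting system in the $\varepsilon_j$ is uniquely solvable, so that the candidate $\ue$ is well defined and independent of choices. By comparison, the reduction to a union of faces and the genericity step leading to the reverse inclusion should be routine once the stratification and the rank formula are in place.
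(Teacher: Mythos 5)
There is a genuine gap, and it sits exactly where you locate ``the heart of the argument'': your mechanism for identifying $\ue$ is incorrect. You propose that the orbit containing $Q[\xi]$ is detected by which of the forms $\xi\mapsto\innV{Q[\xi]}{c_i}_V=\norm{\varphi(c_i)\xi}_E^2$ are nonzero, so that $\ue$ would be read off from the ``diagonal pattern'' of a generic $Q[\xi]$, i.e.\ essentially from whether $\dim\varphi(c_i)E>0$. Already the simplest example refutes this: take $\Omega=\Sym^+(2,\R)$, $E=\R^2$, $Q[\xi]=\xi\,{}^t\xi$. Then $\varphi(c_i)$ is the coordinate projection, so $\dim\varphi(c_1)E=\dim\varphi(c_2)E=1$ and a generic $\xi$ has both diagonal forms nonzero, yet $Q[E]$ is the set of positive semidefinite matrices of rank $\le 1$, i.e.\ $\overline{\rho(H)c_1}$ with $\ue=(1,0)$, not $(1,1)$. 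The point is that a boundary point's $H$-orbit is not determined by which diagonal components are nonzero (e.g.\ $v\,{}^tv$ with $v=(1,1)$ lies in $\rho(H)c_1$ but has pattern $(1,1)$). The true dependence of $\ue$ on the data is \emph{recursive}: $\varepsilon_1=1$ iff $\varphi(c_1)E\ne\{0\}$, but $\varepsilon_j$ for $j\ge 2$ is governed by the \emph{reduced} ranks $\dim\varphi(c_j)E-\dim V_{j1}$ of a smaller quadratic map, and so on down the rank. This is precisely what the paper's induction produces, by splitting $E'=E_{V^{[1]}}\oplus E_{V^{[1]}}^{\perp}$ via the maps $r_\xi,r^*_\xi$ and showing (Proposition~\ref{prop:main}) that $Q[\nu]=\rho(\exp L_{tc_1}\exp L_u)(c_1+\widetilde{Q}[b])$ with $\widetilde{Q}$ an $\Omega'$-positive homogeneous quadratic map on $E_{V^{[1]}}^{\perp}$; no such reduction appears in your plan.

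Two further points. First, even granting the stratification $\overline{\Omega}=\bigsqcup_{\bs{\delta}}\rho(H)c_{\bs{\delta}}$ (itself a nontrivial input from \cite{Ishi2000} that you would need to establish algebraically), your reduction only shows that $Q[E]$ is a union of orbit closures; the assertion that the index set $S$ has a \emph{greatest} element --- equivalently that $Q[E]$ is the closure of a single orbit rather than a union such as $\overline{\rho(H)c_1}\cup\overline{\rho(H)c_2}$ --- is the actual content of the theorem, and your sketch offers no argument for it beyond the flawed diagonal-pattern criterion. Second, the combinatorics you invoke is also off: the closure order on the orbits $\rho(H)c_{\bs{\delta}}$ is \emph{not} the coordinatewise order on $\{0,1\}^r$ (in $\Sym^+(3,\R)$ one has $\rho(H)c_2\subset\overline{\rho(H)(c_1+c_3)}$ although $(0,1,0)\not\le(1,0,1)$), so ``down-closed set'' is not the right description of $S$. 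The overall strategy --- reduce to a single maximal orbit and compute $\ue$ from rank data --- is of course the right goal, but the two steps you defer as ``bookkeeping'' are exactly the ones that fail as proposed.
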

\renewcommand{\theProposition}{\arabic{section}.\arabic{Proposition}}

We note that,
in that paper~\cite{GI2014},
this theorem is proved by using a theory of Laplace transforms, 
and valid for more general quadratic maps called virtual quadratic maps.
On the other hand,
for the proof of explicit formulas of the basic relative invariants,
we need this theorem only for usual quadratic maps
so that we are able to give a proof to this theorem without analytic arguments.
We shall do so in Section~\ref{sect:proof} after preparing some terminologies and notations in Section~\ref{sect:preparation}.

\noindent
\textbf{Acknowledgments.}\rule{0pt}{2em}
The author is grateful to professor Takaaki Nomura
for the encouragement in writing this paper.

\section{Preliminaries}
\label{sect:preparation}

We begin this section with the definition of Vinberg algebras following Vinberg~\cite{Vinberg}.
Let $V$ be a finite dimensional real vector space with a bilinear product $\tm$.
We do not assume the commutativity and the associativity of the product, and also the existence of unit element.
For each $x\in V$,
let $L_x$ denote the left multiplication operator $L_xy=\Clanprod{x}{y}$ $(y\in V)$.
The pair $(V,\tm)$ is called a Vinberg algebra if the following three conditions are satisfied.
\begin{enumerate}
\item[(V1)] 
for any $x,y\in V$, one has $[L_x,L_y]=L_{\Clanprod{x}{y}-\Clanprod{y}{x}}$ (left symmetry)
\item[(V2)] 
there exists an $s\in V^*$ such that
$s(\Clanprod{x}{y})$ defines an inner product in $V$
(compactness)
\item[(V3)] 
for any $x\in V$, eigenvalues of $L_x$ are all real
(normality)
\end{enumerate}
Linear forms $s$ with the property (V2) are said to be \textit{admissible}.
In~\cite{Vinberg}, it is shown that
homogeneous convex domains correspond in a one-to-one way, 
up to isomorphisms,
to Vinberg algebras.
Moreover, homogeneous convex domains are cones if and only if
the corresponding algebras have a unit element.

We summarize properties of Vinberg algebras which we need later.
Let $V$ be a Vinberg algebra with unit element $e_V$.
We equip $V$ with an inner product $\innV{\cdot}{\cdot}_V$
defined by an admissible linear form $s_0\in V^*$,
that is,
\[
\innV{x}{y}_V:=s_0(\Clanprod{x}{y})\quad(x,y\in V).
\]
There exists a complete orthogonal system of primitive idempotents
$c_1,\dots,c_r$ with $c_1+\cdots+c_r=e_V$ such that
$V$ can be decomposed as
\begin{equation}
\label{eq:normal_decomp}
V=\bigoplus_{1\le j\le k\le r}V_{kj}
\end{equation}
where $V_{jj}=\R c_j$ $(j=1,\dots,r)$ and
\[
V_{kj}=
\set{x\in V}{L_{c_i}x=\tfrac{1}{2}(\delta_{ij}+\delta_{ik})x\text{ and } 
\Clanprod{x}{c_i}=\delta_{ij}x\ (i=1,\dots,r)}
\]
for $1\le j<k\le r$.
According to this decomposition,
we have the following multiplication rules:
\begin{equation}
\label{table}
\begin{array}{c}
\Clanprod{V_{ji}}{V_{lk}}=\{0\}\ (\text{if }i\ne k,l),\quad
\Clanprod{V_{kj}}{V_{ji}}\subset V_{ki}\ (i\le j\le k),\\[0.3em]
\Clanprod{V_{ji}}{V_{ki}}\subset V_{jk}\text{ or }V_{kj}\ 
(\text{according to $j\le k$ or $k\le j$, where $i\le j,k$}).
\end{array}
\end{equation}
Weight spaces $V_{kj}$ $(1\le j\le k\le r)$ are mutually orthogonal with respect to the inner product $\innV{\cdot}{\cdot}_V$.
Note that we have $s_0(c_j)>0$ for each $j=1,\dots,r$ because $s_0(c_j)=s_0(\Clanprod{c_j}{c_j})=\innV{c_j}{c_j}>0$.
 
By (V1) and (V3),
the space $\mathfrak{h}:=\set{L_x}{x\in V}$
of left multiplication operators forms a split solvable Lie algebra,
which is linearly isomorphic to $V$.
Let $H:=\exp\mathfrak{h}$ be the connected and simply connected Lie group corresponding to $\mathfrak{h}$.
Then, by \cite{Vinberg},
the $H$-orbit $\Omega$ through $e_V$ is a proper open convex cone in $V$,
and $H$ acts on $\Omega$ linearly and simply transitively.
By introducing lexicographic order among the subspaces $V_{kj}$,
we see that every $L_x$ $(x\in V)$ is simultaneously expressed
by a lower triangular matrix by~\eqref{table}.
Using this,
we introduce a coordinate system on $H$
according to Ishi~\cite[Section 2]{Ishi2000} as follows.
For each $h\in H$,
there exist unique $t_j\in \R$ $(j=1,\dots,r)$ and $L_j\in \bigoplus_{k>j}V_{kj}$ $(j=1,\dots,r-1)$ such that
\begin{equation}
\label{def:coordinate H}
h=\exp(t_1L_{c_1})(\exp L_1)\exp(t_2L_{c_1})\cdots\exp(L_{r-1})\exp(t_rL_{c_r}).
\end{equation}

Let $\Omega^*$ be the dual cone of $\Omega$, that is,
\[
\Omega^*=\set{y\in V}{\innV{x}{y}_V>0\text{ for all }x\in\overline{\Omega}\setminus\{0\}}.
\]
Since $H$ acts on $\Omega^*$ simply transitively through
the contragradient representation $\rho^*$ of $\rho$,
we see that $\Omega^*$ is also a homogeneous cone in $V$.
We denote by $(V,\dtm)$ the Vinberg algebra corresponding to $\Omega^*$.
Then, the following relationship holds between the products $\tm$ and $\dtm$:
\[
\innV{\dClanprod{x}{y}}{z}_V=\innV{y}{\Clanprod{x}{z}}_V\quad
(x,y,z\in V).
\]
Moreover, \cite[Proposition 1.2]{N2014} tells us that
\begin{equation}
\label{eq:relation of tm and dtm}
\Clanprod{x}{y}-\Clanprod{y}{x}
=
\dClanprod{y}{x}-\dClanprod{x}{y}\quad
(x,y\in V).
\end{equation}
%In fact, for any $z\in V$, we have by (V1)
%\[
%\begin{array}{r@{\ }c@{\ }l}
%\innV{\Clanprod{x}{y}-\Clanprod{y}{x}}{z}_V
%&=&
%s_0\bigl(\Clanprod{(\Clanprod{x}{y}-\Clanprod{y}{x})}{z}\bigr)\\
%&=&
%s_0\bigl(\Clanprod{x}{(\Clanprod{y}{z})}-\Clanprod{y}{(\Clanprod{x}{z})}\bigr)\\
%&=&
%\innV{x}{\Clanprod{y}{z}}_V-\innV{y}{\Clanprod{x}{z}}_V\\
%&=&
%\innV{\dClanprod{y}{x}-\dClanprod{x}{y}}{z}_V.
%\end{array}
%\]

Let $\Sym(m,\R)$ be the space of symmetric matrices of size $m$.
For $x=(x_{ij})\in\Sym(m,\R)$, we set
\[
\ul{x}:=\begin{cases}
\frac{1}{2}x_{ii}&(i=j),\\
x_{ij}&(i>j),\\
0&(i<j),
\end{cases}
\quad
\ol{x}:={}^{t\!}(\ul{x}).
\]
Namely, $\ul{x}$ is a lower triangular matrix such that
$\ul{x}+{}^{t\!}(\ul{x})=x$.
The product of Vinberg algebra $(\Sym(m,\R),\tm)$ is given as
\[
\Clanprod{x}{y}=\ul{x}y+y\ol{x}\quad(x,y\in\Sym(m,\R)),
\]
whereas that of $(\Sym(m,\R),\dtm)$ is
\[
\dClanprod{x}{y}=\ol{x}y+y\ul{x}\quad(x,y\in \Sym(m,\R)).
\]

Let $\varphi$ be a linear map from $V$ to $\Sym(m,\R)$.
The map $\varphi$ is called a representation of $(V,\dtm)$
if $\varphi$ is an algebra homomorphism 
\[
\varphi(\dClanprod{x}{y})=\ol{\varphi}(x)\varphi(y)+\varphi(y)\ul{\varphi}(x)\quad(x,y\in V)
\]
with a condition $\varphi(e_V)=I_m$.
Here, we set $\ul{\varphi}(x)=\ul{\varphi(x)}$ and $\ol{\varphi}(x)=\ol{\varphi(x)}$.

Let $E$ be a finite dimensional vector space with an inner product $\innV{\cdot}{\cdot}_E$
and $Q$ an $\Omega$-positive homogeneous quadratic map on $E$.
We extend $Q$ (and use the same symbol) to a bilinear map $Q\colon E\times E\to V$ by polarization, that is,
\[
Q(\xi_1,\xi_2)=\frac{1}{2}\bigl(
Q[\xi_1+\xi_2]-Q[\xi_1]-Q[\xi_2]
\bigr)
\quad(\xi_1,\xi_2\in E).
\]
Let $\varphi$ be a linear map from $V$ to $\mathrm{Sym}(E)$ defined by
\begin{equation}
\label{def:polarization}
\innV{\varphi(x)\xi}{\eta}_E=\innV{x}{Q(\xi,\eta)}_V
\quad(x\in V,\ \xi,\eta\in E).
\end{equation}
Then, $\varphi$ is a representation of $\Omega^*$ in the sense of Rothaus~\cite{Rothaus}, and
%(see Supplementary section).
by~\cite[Theorem 2]{Ishi2011},
we can assume that $\varphi$ is a representation of the Vinberg algebra $(V,\dtm)$
without loss of generality.
%,
%that is,
%$\varphi$ satisfies $\varphi(e_V)=\mathrm{id}_E$ and
%\[
%\varphi(\dClanprod{x}{y})
%=
%\ol{\varphi}(x)\varphi(y)+\varphi(y)\ul{\varphi}(x)\quad
%(x,y\in V),
%\]
%where $\ol{\varphi}(x)$ and $\ul{\varphi}(x)$ are ``upper'' and ``lower'' part of $\varphi(x)$, respectively, satisfying
%\[\ol{\varphi}(x)+\ul{\varphi}(x)=\varphi(x)
%\quad\text{and}\quad
%\ul{\varphi}(x)={}^{t}\bigl(\ol{\varphi}(x)\bigr)\quad(x\in V).
%\]
%(see Supplementary section)
Using this, 
we equip $W:=E\oplus V$ with a bilinear product $\tm$ by
\begin{equation}
\label{eq:prod W}
\Clanprod{(\xi_1+x_1)}{(\xi_2+x_2)}:=\ul{\varphi}(x_1)\xi_2+(2\,Q(\xi_1,\xi_2)+\Clanprod{x_1}{x_2})
\end{equation}
for $\xi_1,\xi_2\in E$ and $x_1,x_2\in V$.
Here, $\tm$ in the right hand side is the product of $(V,\tm)$.
Then, $(W,\tm)$ is a Vinberg algebra with no unit element
(cf.\ \cite[\S3]{NN2013}).
Notice that we have $\Clanprod{\xi_1}{\xi_2}=\Clanprod{\xi_2}{\xi_1}$ for any $\xi_1,\xi_2\in E$.
%(see supplementary section)
The inner product of $W$ is defined by using the admissible linear form $s_0$ of $V$ as
\begin{equation}
\label{eq:innW}
\begin{array}{r@{\ }c@{\ }l}
\innV{\xi_1+x_1}{\xi_2+x_2}_W
&:=&
s_0(2\,Q(\xi_1,\xi_2)+\Clanprod{x_1}{x_2})\\
&=&
\innV{x_1}{x_2}_V+2\innV{\xi_1}{\xi_2}_E.
\end{array}
\end{equation}
Note that $\norm{\xi}_W^2=2\norm{\xi}_E^2$.

\section{Algebraic proof}
\label{sect:proof}

In this section,
we give an algebraic proof to Theorem~\ref{theo:GI2014}.
Let $V$ be a Vinberg algebra corresponding to a homogeneous cone $\Omega$ of rank $r$,
and we keep all notations from the previous sections.
First, we decompose $V$ into three subspaces as follows:
\[
V=\R c_1\oplus V^{[1]}\oplus V',\quad
V^{[1]}:=\bigoplus_{k=2}^rV_{k1},\quad
V':=\bigoplus_{2\le j\le k\le r}V_{kj}.
\]
If we restrict the product $\tm$ of $V$ to $V'$ (and use the same notation),
then $(V',\tm)$ is also a Vinberg algebra so that
there exist a homogeneous cone $\Omega'$ corresponding to $V'$
and a split solvable Lie group $H'$ acting on $\Omega'$ simply transitively.
As in \cite[\S2]{N2018},
each element $h'$ in $H'$ can be written by removing the terms corresponding to $\R c_1$ and $V^{[1]}$ from~\eqref{def:coordinate H}, that is, there exist $h_j\in\R_{>0}$ $(j=2,\dots,r)$ and $v_{kj}\in V_{kj}$ $(2\le j<k\le r)$ such that
\[
h'=(\exp T_2)(\exp L_2)(\exp T_3)\cdots(\exp L_{r-1})(\exp T_r),
\]
where $T_j=(2\log h_j)L_{c_j}$ and $L_j=\sum_{k>j}L_{v_{kj}}$.
Moreover, if we set $Q'[\xi]:=\Clanprod{\xi}{\xi}$ for $\xi\in V^{[1]}$,
then $Q'$ is an $\Omega'$-positive homogeneous quadratic map on $V^{[1]}$.

Let $Q$ be an $\Omega$-positive homogeneous quadratic map on $E$,
and $\varphi$ the corresponding linear map 
$\varphi\colon V\to\mathrm{Sym}(E)$.
Since $c_1,\dots,c_r$ satisfy $\Clanprod{c_j}{c_k}=\delta_{jk}$,
we see that $\varphi(c_1),\dots,\varphi(c_r)$ are orthogonal projection on $E$.
By setting
$E_i:=\varphi(c_i)E$ for $i=1,\dots,r$,
we decompose $E$ into
\[
E=E_1\oplus E',\quad
E'=\bigoplus_{k=2}^rE_k.
\]
For each $\nu\in E$, we set $\xi_\nu:=\varphi(c_1)\nu$,
the orthogonal projection of $\nu$ to $E_1$.
We note that
the Vinberg algebra $W=E\oplus V$ can be described in a formal matrix form as
\[
W=\pmat{0&E_1&E'\\ E_1&\R c_1&V^{[1]}\\ E'&V^{[1]}&V'}.
\]
For later use,
we summarize multiplication rules between the spaces $E_1$, $E'$ and $V^{[1]}$
as a lemma, which can be obtained easily from \eqref{table}.

\begin{Lemma}
\label{lemma:product}
With respect to $E_1$, $E'$ and $V^{[1]}$,
one has the following multiplication table.
\[
\raisebox{-12pt}{\rm left}\,
\begin{array}{r|rrr}
\multicolumn{4}{c}{\quad{\rm right}}\\
&E_1&E'&V^{[1]}\\ \hline
\rule{0pt}{13pt}
E_1&\R c_1&V^{[1]}&0\\
\rule{0pt}{11pt}
E'&V^{[1]}&V'&0\\
\rule{0pt}{11pt}
V^{[1]}&E'&0&V'
\end{array}
\]
Here,
left factor of the products are placed in row entries,
and right ones in column entries.
\end{Lemma}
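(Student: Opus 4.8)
The plan is to compute all nine products directly from the explicit multiplication \eqref{eq:prod W} on $W$, after recording how $\varphi$ grades $E$. From \eqref{eq:prod W} one reads off, for $\xi,\eta\in E$ and $v,v_1,v_2\in V$, that $\Clanprod{\xi}{\eta}=2Q(\xi,\eta)\in V$, that $\Clanprod{v}{\xi}=\ul{\varphi}(v)\xi\in E$, that $\Clanprod{\xi}{v}=0$, and that $\Clanprod{v_1}{v_2}$ is the original product in $V$. Two entries are therefore immediate: $\Clanprod{E_1}{V^{[1]}}=0$ and $\Clanprod{E'}{V^{[1]}}=0$, as both are instances of $\Clanprod{\xi}{v}=0$. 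The entry $\Clanprod{V^{[1]}}{V^{[1]}}\subset V'$ is pure $V$-algebra: for $x\in V_{k1}$ and $y\in V_{l1}$ with $k,l\ge2$, the third rule of \eqref{table} (with common lower index $1$) gives $\Clanprod{x}{y}\in V_{kl}$ or $V_{lk}$, and since $k,l\ge2$ both lie in $V'$.

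The remaining six entries all rest on one structural fact, which I would isolate as the key step. Writing $P_i:=\varphi(c_i)$, the $P_i$ are mutually orthogonal projections with $\sum_iP_i=\varphi(e_V)=I$ (self-adjoint projections summing to $I$ are automatically mutually orthogonal), so $E=\bigoplus_iE_i$ orthogonally; moreover, for $x\in V_{kj}$ with $j\le k$ the triangular part $\ul{\varphi}(x)$ maps $E_j$ into $E_k$ and annihilates $E_i$ for every $i\ne j$ (equivalently $\varphi(x)=P_k\varphi(x)P_j+P_j\varphi(x)P_k$, with $\ul{\varphi}(x)=P_k\varphi(x)P_j$ the lower block). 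To prove this I would apply $\varphi$ to the identity $\dClanprod{c_i}{x}=\tfrac12(\delta_{ij}+\delta_{ik})x$, valid for $x\in V_{kj}$: in a basis adapted to $E=\bigoplus_iE_i$ one has $\ul{\varphi}(c_i)=\ol{\varphi}(c_i)=\tfrac12P_i$, so the homomorphism property of $\varphi$ for $(V,\dtm)$ gives $(\delta_{ij}+\delta_{ik})\varphi(x)=P_i\varphi(x)+\varphi(x)P_i$. Sandwiching this between $P_a$ and $P_b$ and comparing the coefficient of $P_a\varphi(x)P_b$ for all $i$ forces $P_a\varphi(x)P_b=0$ unless $\{a,b\}=\{j,k\}$, which is the claim.

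Granting the grading, the last six entries fall out. For the $Q$-products I would test against $V$ using \eqref{def:polarization}: for $\xi\in E_i$, $\eta\in E_{i'}$ and $x\in V_{lm}$ one has $\innV{Q(\xi,\eta)}{x}_V=\innV{\varphi(x)\xi}{\eta}_E=\innV{P_{i'}\varphi(x)P_i\,\xi}{\eta}_E$, which vanishes unless $\{l,m\}=\{i,i'\}$ by the block structure; hence $Q(E_i,E_{i'})\subset V_{\max(i,i'),\min(i,i')}$. This gives $\Clanprod{E_1}{E_1}=2Q(E_1,E_1)\subset V_{11}=\R c_1$, then $\Clanprod{E_1}{E'}=\Clanprod{E'}{E_1}\subset\sum_{k\ge2}V_{k1}=V^{[1]}$, and $\Clanprod{E'}{E'}\subset\sum_{j,k\ge2}V_{\max,\min}\subset V'$. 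For the two $\ul{\varphi}$-products, $x\in V_{k1}$ with $k\ge2$ gives $\ul{\varphi}(x)E_1\subset E_k\subset E'$, so $\Clanprod{V^{[1]}}{E_1}\subset E'$, while $\ul{\varphi}(x)$ kills $E_i$ for $i\ne1$, so $\Clanprod{V^{[1]}}{E'}=0$.

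The main obstacle is the grading step of the second paragraph: everything else is bookkeeping with \eqref{eq:prod W} and \eqref{table}, but the single-block structure of $\ul{\varphi}(V_{kj})$—equivalently, the fact that the $E_i$ are exactly the weight spaces of the Vinberg algebra $W$ recorded by its formal-matrix form—is what collapses the table to the stated one. One could alternatively package all nine entries at once by invoking the rules \eqref{table} directly for $W$ (they use only (V1) and the eigenstructure of the $L_{c_i}$, not a unit element), but this still reduces to checking $L_{c_a}\xi=\tfrac12\delta_{ai}\xi$ for $\xi\in E_i$, i.e.\ the same computation.
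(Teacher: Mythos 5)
Your proof is correct and fills in exactly the details behind the paper's one-line justification (the paper merely asserts that the table "can be obtained easily from \eqref{table}", i.e.\ from the weight-space multiplication rules applied to $W$ in its formal matrix form). The key step you isolate — that the homomorphism property applied to $\dClanprod{c_i}{x}=\tfrac12(\delta_{ij}+\delta_{ik})x$ forces $\varphi(x)=\varphi(c_k)\varphi(x)\varphi(c_j)+\varphi(c_j)\varphi(x)\varphi(c_k)$ for $x\in V_{kj}$, hence the single-block structure of $\ul{\varphi}(x)$ and the location of $Q(E_i,E_{i'})$ — is precisely what the paper's appeal to \eqref{table} for $W$ presupposes, so the two arguments coincide in substance.
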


Suppose $E_1\ne\{0\}$.
We take and fix a non-zero $\xi\in E_1$.
Using this $\xi$,
we introduce two linear maps $r_\xi\colon V^{[1]}\to E'$ and 
$r^*_\xi\colon E'\to V^{[1]}$ by
\[
r_\xi(v)=\Clanprod{v}{\xi}\quad(v\in V^{[1]}),\quad
r^*_\xi(a)=\Clanprod{a}{\xi}\quad(a\in E').
\]
%In other words,
%$r_\xi$ and $r^*_\xi$ are restrictions of the right multiplication operator $R_\xi$ to $V^{[1]}$ and $E'$, respectively.

\begin{Lemma}
\label{lemma:orthogonal prod}
For any $a\in \mathrm{Image}\,r_\xi$ and $b\in \mathrm{Ker}\,r^*_\xi$, one has $\Clanprod{a}{b}=0$.
\end{Lemma}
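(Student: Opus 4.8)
The plan is to reduce $\Clanprod{a}{b}$ to an expression involving $\Clanprod{b}{\xi}$, which vanishes by hypothesis, using the left symmetry axiom (V1) together with the vanishing entries of the multiplication table in Lemma~\ref{lemma:product}. Since $a\in\mathrm{Image}\,r_\xi$, we may write $a=\Clanprod{v}{\xi}$ for some $v\in V^{[1]}$, while $b\in\mathrm{Ker}\,r^*_\xi\subset E'$ means exactly that $\Clanprod{b}{\xi}=0$. Both $a$ and $b$ lie in $E'$, so $\Clanprod{a}{b}\in V'$ by the table, and we aim to show that this product is zero.

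The heart of the argument is to apply (V1) to the triple $x=v$, $y=\xi$, $z=b$. Evaluating $[L_v,L_\xi]=L_{\Clanprod{v}{\xi}-\Clanprod{\xi}{v}}$ at $b$ gives
\[
\Clanprod{v}{(\Clanprod{\xi}{b})}-\Clanprod{\xi}{(\Clanprod{v}{b})}
=\Clanprod{(\Clanprod{v}{\xi}-\Clanprod{\xi}{v})}{b}.
\]
Now I would read off from Lemma~\ref{lemma:product} that two of these summands vanish: since $v\in V^{[1]}$ and $b\in E'$ we have $\Clanprod{v}{b}=0$ (the $V^{[1]}\times E'$ entry is $0$), and since $\xi\in E_1$ and $v\in V^{[1]}$ we have $\Clanprod{\xi}{v}=0$ (the $E_1\times V^{[1]}$ entry is $0$). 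The identity therefore collapses to $\Clanprod{v}{(\Clanprod{\xi}{b})}=\Clanprod{(\Clanprod{v}{\xi})}{b}=\Clanprod{a}{b}$.

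It then remains only to handle the surviving factor $\Clanprod{\xi}{b}$. Because $\xi,b\in E$ and the product on $W$ is commutative on $E$, we have $\Clanprod{\xi}{b}=\Clanprod{b}{\xi}=r^*_\xi(b)=0$, whence $\Clanprod{a}{b}=\Clanprod{v}{0}=0$, as desired. The only real subtlety is choosing the right triple for (V1): once $[L_v,L_\xi]$ is evaluated at $b$, two of the four summands are annihilated by the multiplication table, collapsing the relation to $\Clanprod{a}{b}=\Clanprod{v}{(\Clanprod{\xi}{b})}$, and the surviving inner factor $\Clanprod{\xi}{b}$ is precisely what the kernel condition (through commutativity on $E$) sends to zero.
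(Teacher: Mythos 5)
Your proof is correct and follows essentially the same route as the paper: write $a=\Clanprod{v}{\xi}$, apply the left symmetry (V1) to the triple $(v,\xi,b)$, kill the terms $\Clanprod{\xi}{v}$ and $\Clanprod{v}{b}$ via Lemma~\ref{lemma:product}, and use commutativity of the product on $E$ to identify the surviving factor $\Clanprod{\xi}{b}$ with $r^*_\xi(b)=0$. No gaps; this matches the paper's argument step for step.
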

\begin{proof}
Since $a\in \mathrm{Image}\,r_\xi$, 
there exists $v\in V^{[1]}$ such that $a=r_\xi(v)=\Clanprod{v}{\xi}$.
The condition (V1) yields that
\[
\Clanprod{a}{b}
=
\Clanprod{(\Clanprod{v}{\xi})}{b}
=
\Clanprod{(\Clanprod{\xi}{v})}{b}
+
\Clanprod{v}{(\Clanprod{\xi}{b})}
-
\Clanprod{\xi}{(\Clanprod{v}{b})}.
\]
By Lemma~\ref{lemma:product},
we know that $\Clanprod{\xi}{v}=0$ and $\Clanprod{v}{b}=0$ so that
\[
\Clanprod{a}{b}
=
\Clanprod{v}{(\Clanprod{\xi}{b})}.
\]
Since $\xi,b\in E$, the equation~\eqref{eq:prod W} tells us that $\Clanprod{\xi}{b}=\Clanprod{b}{\xi}=r^*_\xi(b)$,
and hence
we conclude $\Clanprod{a}{b}=0$ by $b\in\mathrm{Ker}\,r^*_\xi$.
\end{proof}

\begin{Lemma}
\label{lemma:direct sum}
One has $E'=\mathrm{Image}\,r_\xi\oplus \mathrm{Ker}\,r^*_\xi$
$($direct sum$)$.
\end{Lemma}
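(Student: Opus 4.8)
The plan is to establish the stronger fact that $r^*_\xi$ is exactly the adjoint of $r_\xi$ with respect to the inner products that $\innV{\cdot}{\cdot}_W$ induces on $V^{[1]}$ and $E'$; the claimed decomposition is then nothing but the standard orthogonal splitting $E'=\mathrm{Image}\,r_\xi\oplus\mathrm{Ker}\,(r_\xi)^{*}$ associated to any linear map between inner product spaces. First I would read off from the product \eqref{eq:prod W}, using that $\xi\in E_1\subset E$ carries no $V$-component, the two explicit descriptions $r_\xi(v)=\Clanprod{v}{\xi}=\ul{\varphi}(v)\xi\in E'$ for $v\in V^{[1]}$ and $r^*_\xi(a)=\Clanprod{a}{\xi}=2\,Q(a,\xi)\in V^{[1]}$ for $a\in E'$.

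Next I would pair both sides against $\innV{\cdot}{\cdot}_W$ for $v\in V^{[1]}$ and $a\in E'$. Since $r_\xi(v)$ and $a$ both lie in $E$, formula \eqref{eq:innW} gives $\innV{r_\xi(v)}{a}_W=2\innV{\ul{\varphi}(v)\xi}{a}_E$; since $v$ and $r^*_\xi(a)$ both lie in $V$, formula \eqref{eq:innW} together with the defining relation \eqref{def:polarization} of $\varphi$ gives $\innV{v}{r^*_\xi(a)}_W=2\innV{v}{Q(a,\xi)}_V=2\innV{\varphi(v)\xi}{a}_E$. Subtracting the two and writing $\varphi(v)=\ul{\varphi}(v)+\ol{\varphi}(v)$, the entire discrepancy equals $2\innV{\ol{\varphi}(v)\xi}{a}_E$, so the whole statement reduces to the vanishing of this single term.

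This vanishing is the only place where the Vinberg-algebra structure is used, and it falls straight out of Lemma~\ref{lemma:product}. Because $\ol{\varphi}(v)={}^{t}\!\bigl(\ul{\varphi}(v)\bigr)$ is the transpose of $\ul{\varphi}(v)$ relative to $\innV{\cdot}{\cdot}_E$, one has $\innV{\ol{\varphi}(v)\xi}{a}_E=\innV{\xi}{\ul{\varphi}(v)a}_E$; and $\ul{\varphi}(v)a=\Clanprod{v}{a}$ is the product of an element of $V^{[1]}$ with one of $E'$, which by the table in Lemma~\ref{lemma:product} is $0$. Hence $r^*_\xi=(r_\xi)^{*}$, and the orthogonal direct sum $E'=\mathrm{Image}\,r_\xi\oplus\mathrm{Ker}\,r^*_\xi$ follows immediately.

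I expect the main obstacle to be purely bookkeeping: keeping the two normalisations consistent (the factor $2$ produced by \eqref{eq:innW} on the $E$-side must match the one produced on the $V$-side through \eqref{def:polarization}) and reading the identifications $r_\xi(v)=\ul{\varphi}(v)\xi$ and $r^*_\xi(a)=2Q(a,\xi)$ correctly off \eqref{eq:prod W}. As an independent check that the sum is direct, I note that any $w\in\mathrm{Image}\,r_\xi\cap\mathrm{Ker}\,r^*_\xi$ satisfies $\Clanprod{w}{w}=0$ by Lemma~\ref{lemma:orthogonal prod}, whence $Q[w]=0$ and so $w=0$ by $\Omega$-positivity; this recovers the trivial intersection without appealing to the adjoint.
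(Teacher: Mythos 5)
Your proof is correct, but it follows a genuinely different route from the paper's. The paper makes no use of adjoints: it computes the composition $r^*_\xi\circ r_\xi$ directly from the left symmetry (V1), together with $\Clanprod{\xi}{\xi}=\frac{2\norm{\xi}_E^2}{s_0(c_1)}c_1$ and $\Clanprod{v}{c_1}=v$, obtaining $r^*_\xi\circ r_\xi=\frac{\norm{\xi}_E^2}{s_0(c_1)}\,\mathrm{id}_{V^{[1]}}$; this scalar identity yields injectivity of $r_\xi$, the explicit decomposition $x=a+b$ with $a=\frac{s_0(c_1)}{\norm{\xi}_E^2}\,r_\xi(r^*_\xi(x))$, and, combined with Lemma~\ref{lemma:orthogonal prod} and \eqref{eq:innW}, the trivial intersection. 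You instead show that $r^*_\xi$ is the adjoint of $r_\xi$ for the inner products induced by $\innV{\cdot}{\cdot}_W$, reducing the whole lemma to the vanishing of $\innV{\xi}{\Clanprod{v}{a}}_E$, which Lemma~\ref{lemma:product} supplies since $\Clanprod{V^{[1]}}{E'}=\{0\}$; the splitting is then the standard image--kernel decomposition attached to an adjoint pair. Your bookkeeping is consistent: the factor $2$ from \eqref{eq:innW} on the $E$-side does match the $2\,Q(a,\xi)$ read off from \eqref{eq:prod W} on the $V$-side, and $\ol{\varphi}(v)={}^{t}(\ul{\varphi}(v))$ is indeed the $\innV{\cdot}{\cdot}_E$-adjoint of $\ul{\varphi}(v)$. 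What your approach buys is a more conceptual argument that also exhibits the sum as orthogonal; what the paper's approach buys is the explicit scalar $\frac{\norm{\xi}_E^2}{s_0(c_1)}$, which is reused verbatim in the proof of Proposition~\ref{prop:main} (to get $u=e^{t/2}v$), so that computation cannot ultimately be dispensed with even if one adopts your proof of this lemma.
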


\begin{proof}
At first, we consider the composition $r^*_\xi\circ r_\xi$.
The left symmetry (V1) of the product yields that,
for $v\in V^{[1]}$
\[
r^*_\xi\circ r_\xi(v)
=
\Clanprod{(\Clanprod{v}{\xi})}{\xi}
=
\Clanprod{(\Clanprod{\xi}{v})}{\xi}
+
\Clanprod{v}{(\Clanprod{\xi}{\xi})}
-
\Clanprod{\xi}{(\Clanprod{v}{\xi})}.
\]
Lemma~\ref{lemma:product} tells us that
$\Clanprod{v}{\xi}\in E'\subset E$ so that
\eqref{eq:prod W} yields that
\[
\Clanprod{\xi}{(\Clanprod{v}{\xi})}=\Clanprod{(\Clanprod{v}{\xi})}{\xi}.\]
Thus, 
since $\Clanprod{\xi}{v}=0$ again by Lemma~\ref{lemma:product}, 
we have by the left symmetry
\[
\Clanprod{(\Clanprod{v}{\xi})}{\xi}
=
\Clanprod{v}{(\Clanprod{\xi}{\xi})}
-
\Clanprod{(\Clanprod{v}{\xi})}{\xi},
\]
whence $\Clanprod{(\Clanprod{v}{\xi})}{\xi}=
\frac{1}{2}\Clanprod{v}{(\Clanprod{\xi}{\xi})}$.
We note that
$\Clanprod{\xi}{\xi}=\frac{2\norm{\xi}_E^2}{s_0(c_1)}c_1$.
In fact,
Lemma~\ref{lemma:product} tells us that
there exists a suitable number $A$ such that $\Clanprod{\xi}{\xi}=Ac_1$, and then by \eqref{eq:innW}
\[
As_0(c_1)=s_0(Ac_1)=s_0\bigl(\Clanprod{\xi}{\xi}\bigr)=\norm{\xi}_W^2=2\norm{\xi}_E^2.
\]
Since $\Clanprod{v}{c_1}=v$ by definition of $V^{[1]}$,
we obtain
\[
r^*_\xi\circ r_\xi(v)
=
\Clanprod{(\Clanprod{v}{\xi})}{\xi}
=
\frac{1}{2}\,\Clanprod{v}{(\Clanprod{\xi}{\xi})}
=
\frac{\norm{\xi}_E^2}{s_0(c_1)}\,v,
\]
that is,
$r^*_\xi\circ r_\xi$ is a scalar map.
This means that
$r_\xi$ is injective while $r^*_\xi$ is surjective.
In particular, any $x\in E'$ can be written as
\[x=a+b\quad(a\in \mathrm{Image}\,r_\xi,\ b\in \mathrm{Ker}\,r^*_\xi).\]
In fact,
let $a=\frac{s_0(c_1)}{\norm{\xi}_E^2}r_\xi(r^*_\xi(x))\in\mathrm{Image}\,r_\xi$.
Then, since $r^*_\xi$ is linear and $r^*_\xi\circ r_\xi$ is a scalar map,
we have
\[
r^*_{\xi}(x-a)=r^*_\xi(x)-\frac{s_0(c_1)}{\norm{\xi}_E^2}r^*_\xi\circ r_\xi\circ r^*_\xi(x)
=
r^*_\xi(x)- r^*_\xi(x)=0,
\]
which implies $b:=x-a\in\mathrm{Ker}\,r^*_\xi$.

Next, 
let us take $x\in\mathrm{Image}\,r_\xi\cap \mathrm{Ker}\,r^*_\xi$.
Then,
Lemma~\ref{lemma:orthogonal prod}
together with \eqref{eq:innW} tells us that
\[
\norm{x}_W^2=\innV{x}{x}_W=s_0(\Clanprod{x}{x})=0,
\]
whence $\mathrm{Image}\,r_\xi\cap \mathrm{Ker}\,r^*_\xi=\{0\}$.
Now there is nothing to prove.
\end{proof}

This lemma implies that
$\mathrm{Image}\,r_\xi$ can be regarded as a copy of $V^{[1]}$ in $E'$ so that
we rewrite $\mathrm{Image}\,r_\xi$ and $\mathrm{Ker}\,r^*_\xi$ as
\[
E_{V^{[1]}}=E_{V^{[1]}}(\xi):=\mathrm{Image}\,r_\xi,\quad
E_{V^{[1]}}^\perp=E_{V^{[1]}}^\perp(\xi):=\mathrm{Ker}\,r^*_\xi.
\]
For each $x\in V'$,
let $\widetilde{\varphi}(x)$ denote the restriction of $\varphi(x)$ to $E_{V^{[1]}}^\perp$.
Then, $(\widetilde{\varphi},E_{V^{[1]}}^\perp)$ is a representation of $(V',\dtm)$.
In fact, 
Lemma~\ref{lemma:product} yields that
$\Clanprod{b}{x}=0$ and $\Clanprod{x}{\xi}=0$
for $b\in E_{V^{[1]}}^\perp$ and $x\in V'$ so that
we obtain by the left symmetry (V1)
\[
\begin{array}{r@{\ }c@{\ }l}
\Clanprod{(\Clanprod{x}{b})}{\xi}
&=&
\Clanprod{(\Clanprod{b}{x})}{\xi}
+
\Clanprod{x}{(\Clanprod{b}{\xi})}
-
\Clanprod{b}{(\Clanprod{x}{\xi})}\\
&=&
\Clanprod{x}{(r^*_\xi(b))}=0,
\end{array}
\]
whence $\widetilde{\varphi}(x)$ preserves $E_{V^{[1]}}^\perp$ for any $x\in V'$.
Let $\widetilde{Q}$ be the corresponding quadratic map.
Then, it is $\Omega'$-positive and homogeneous.

Let $\widetilde{Q}$ denote the restriction of $Q$ to $E_{V^{[1]}}^\perp$, that is,
\[
\widetilde{Q}[b]:=Q[b]\quad(b\in E_{V^{[1]}}^\perp).
\]

\begin{Lemma}
\label{lemma:tildeQ}
The quadratic map $\widetilde{Q}$ is $\Omega'$-positive and homogeneous.
\end{Lemma}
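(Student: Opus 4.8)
The plan is to reduce both assertions to a single structural fact: that $\widetilde{Q}$ takes its values in the subalgebra $V'$. Once that is known, $\widetilde{Q}$ is recognized as the quadratic map attached to the representation $\widetilde{\varphi}$ of $(V',\dtm)$, and $\Omega'$-positivity and homogeneity follow from the correspondence between representations and positive homogeneous quadratic maps.

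First I would show that $Q[b]\in V'$ for every $b\in E_{V^{[1]}}^\perp$. For $x\in V$ and $b\in E$ the defining relation \eqref{def:polarization}, together with the splitting $\varphi(x)=\ul{\varphi}(x)+\ol{\varphi}(x)$ with $\ol{\varphi}(x)={}^{t}\ul{\varphi}(x)$ and the identity $\ul{\varphi}(x)b=\Clanprod{x}{b}$ coming from \eqref{eq:prod W}, gives $\innV{x}{Q[b]}_V=\innV{\varphi(x)b}{b}_E=2\innV{\Clanprod{x}{b}}{b}_E$. Taking $x=c_1$ I use $\varphi(c_1)b=0$ (since $\varphi(c_1)$ is the orthogonal projection onto $E_1$ and $b\in E'$), so $\innV{c_1}{Q[b]}_V=0$; taking $x=w\in V^{[1]}$ I use Lemma~\ref{lemma:product}, which gives $\Clanprod{w}{b}\in\Clanprod{V^{[1]}}{E'}=\{0\}$, so $\innV{w}{Q[b]}_V=0$. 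Since the weight spaces are mutually orthogonal, $Q[b]$ is orthogonal to $\R c_1\oplus V^{[1]}$, i.e. $\widetilde{Q}[b]=Q[b]\in V'$.

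Second, because $Q[b]\in V'$, for every $x\in V'$ one has $\innV{\widetilde{\varphi}(x)b}{b}_E=\innV{\varphi(x)b}{b}_E=\innV{x}{Q[b]}_V=\innV{x}{\widetilde{Q}[b]}_{V'}$, so $\widetilde{Q}$ is exactly the quadratic map determined by the representation $(\widetilde{\varphi},E_{V^{[1]}}^\perp)$ of $(V',\dtm)$; note also $\widetilde{\varphi}(e_{V'})=I$, since $\varphi(c_2)+\cdots+\varphi(c_r)$ restricts to the identity on $E'\supseteq E_{V^{[1]}}^\perp$. The correspondence between representations of a dual Vinberg algebra and $\Omega'$-positive homogeneous quadratic maps (Rothaus~\cite{Rothaus}, Ishi~\cite{Ishi2011}) then yields at once that $\widetilde{Q}$ is $\Omega'$-positive and homogeneous, which is the assertion of the lemma.

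If instead one wants a self-contained derivation, $\Omega'$-positivity can be read off directly: $\Omega$-positivity of $Q$ gives $Q[b]\in\overline{\Omega}\setminus\{0\}$ for $b\ne 0$, and combined with $Q[b]\in V'$ and the subcone identity $\overline{\Omega}\cap V'=\overline{\Omega'}$ (the Vinberg subalgebra $V'$ corresponding to $\Omega'$) this forces $\widetilde{Q}[b]\in\overline{\Omega'}\setminus\{0\}$. For homogeneity one restricts the homogeneity of $Q$ to $h'\in H'\subset GL(\Omega)$, using $\rho(h')|_{V'}=\rho'(h')$ and $\widetilde{Q}[b]=Q[b]\in V'$, and must check that the accompanying $A_{h'}\in GL(E)$ can be chosen to preserve $E_{V^{[1]}}^\perp$. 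This last point is where I expect the real difficulty to lie: I would attempt to extract it from the intertwining relation $\varphi(\rho(h')^{*}x)=A_{h'}^{*}\varphi(x)A_{h'}$ (obtained by polarizing $\rho(h')Q[\nu]=Q[A_{h'}\nu]$) together with the facts that $H'$ fixes $c_1$ and stabilizes the grading, before extending from $H'$ to all of $GL(\Omega')$. The correspondence route above is cleaner, as it settles positivity and homogeneity simultaneously and bypasses this delicate descent.
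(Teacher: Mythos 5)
Your argument is correct, and it is essentially the route the paper itself takes: the paragraph immediately preceding the lemma already records that $(\widetilde{\varphi},E_{V^{[1]}}^{\perp})$ is a representation of $(V',\dtm)$ and that the associated quadratic map is $\Omega'$-positive and homogeneous, which is exactly your ``correspondence'' step. The differences are minor but worth noting. For $Q[E_{V^{[1]}}^{\perp}]\subset V'$ you use the adjoint identity $\innV{x}{Q[b]}_V=2\innV{\Clanprod{x}{b}}{b}_E$; the paper gets the same containment in one line from $\Clanprod{E'}{E'}\subset V'$ in Lemma~\ref{lemma:product} (both versions then need the standard fact $\overline{\Omega}\cap V'=\overline{\Omega'}$, which neither spells out). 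For homogeneity the paper does not pass through the representation correspondence at all: it invokes the equivalence ``$Q$ is homogeneous if and only if $W=E\oplus V$ with the product \eqref{eq:prod W} is a Vinberg algebra'' and reduces the lemma to showing that $W'=E_{V^{[1]}}^{\perp}\oplus V'$ is a subalgebra of $W$, the only nontrivial inclusion being $\Clanprod{V'}{E_{V^{[1]}}^{\perp}}\subset E_{V^{[1]}}^{\perp}$, verified by a short (V1) computation. Be aware that this very computation is also what legitimizes your appeal to $\widetilde{\varphi}$: the restriction of $\varphi(x)$ to $E_{V^{[1]}}^{\perp}$ is a well-defined operator on that space only once one knows $\varphi(x)$ preserves it, so your main route silently imports the lemma's key step from the surrounding setup rather than proving it. Finally, the difficulty you flag in your self-contained variant --- choosing $A_{h'}$ to preserve $E_{V^{[1]}}^{\perp}$ --- is precisely what the subalgebra formulation is designed to bypass: once $W'$ is known to be a Vinberg subalgebra, homogeneity of $\widetilde{Q}$ follows with no descent of the operators $A_{h'}$.
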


\begin{proof}
Since $\Clanprod{E'}{E'}\subset V'$, 
we see that $\widetilde{Q}$ is $\Omega'$-positive and thus we shall prove homogeneity.
Since $W':=E_{V^{[1]}}^\perp\oplus V'$ is a subspace of $W$
and $W$ satisfies the product $\eqref{eq:prod W}$, 
it is enough to show $\Clanprod{V'}{E_{V^{[1]}}^\perp}\subset E_{V^{[1]}}^\perp$.
By Lemma~\ref{lemma:product}, we have
$\Clanprod{b}{x}=0$ and $\Clanprod{x}{\xi}=0$
for $b\in E_{V^{[1]}}^\perp$ and $x\in V'$ so that
the left symmetry (V1) yields
\[
\begin{array}{r@{\ }c@{\ }l}
\Clanprod{(\Clanprod{x}{b})}{\xi}
&=&
\Clanprod{(\Clanprod{b}{x})}{\xi}
+
\Clanprod{x}{(\Clanprod{b}{\xi})}
-
\Clanprod{b}{(\Clanprod{x}{\xi})}\\
&=&
\Clanprod{x}{(r^*_\xi(b))}=0,
\end{array}
\]
which is what we want to prove.
\end{proof}

\begin{Proposition}
\label{prop:main}
For $\nu\in E$ with $\xi=\xi_\nu\ne 0$,
we decompose it according to the direct sum decomposition $E=E_1\oplus E_{V^{[1]}}\oplus E_{V^{[1]}}^\perp$ as 
\begin{equation}
\label{eq:decom xi}
\nu=\xi+a+b\quad(\xi=\xi_\nu\in E_1,\ a\in E_{V^{[1]}},\ b\in E_{V^{[1]}}^\perp).
\end{equation}
let $t>0$ and $u\in V^{[1]}$ be 
\[
e^t:=\frac{2\norm{\xi}_E^2}{s_0(c_1)},\quad
u:=2e^{-t/2}r^*_\xi(a)=2\sqrt{\frac{s_0(c_1)}{2\norm{\xi}^2_E}}\,r^*_\xi(a).
\]
Then, one has
\[
Q[\nu]=\rho(\exp L_{tc_1}\exp L_u)(c_1+\widetilde{Q}[b]).
\]
\end{Proposition}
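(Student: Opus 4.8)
The plan is to compute both sides explicitly inside the ambient Vinberg algebra $W=E\oplus V$ and compare them. For the left-hand side I would start from the identity $2Q[\nu]=\Clanprod{\nu}{\nu}$, which follows from \eqref{eq:prod W} since $\xi,a,b\in E$, and expand $\Clanprod{\nu}{\nu}$ bilinearly using $\nu=\xi+a+b$. The commutativity of the product on $E$ reduces this to the six terms $\Clanprod{\xi}{\xi}$, $\Clanprod{a}{a}$, $\Clanprod{b}{b}$, $2\Clanprod{\xi}{a}$, $2\Clanprod{\xi}{b}$ and $2\Clanprod{a}{b}$. Now Lemma~\ref{lemma:orthogonal prod} gives $\Clanprod{a}{b}=0$, while $\Clanprod{\xi}{b}=r^*_\xi(b)=0$ because $b\in E_{V^{[1]}}^\perp=\mathrm{Ker}\,r^*_\xi$; thus only the three ``diagonal'' blocks and the single cross term $\Clanprod{\xi}{a}=r^*_\xi(a)$ survive.

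It then remains to evaluate $\Clanprod{\xi}{\xi}$, $\Clanprod{a}{a}$ and $\Clanprod{b}{b}$. The first is already computed in the proof of Lemma~\ref{lemma:direct sum}, namely $\Clanprod{\xi}{\xi}=\tfrac{2\norm{\xi}_E^2}{s_0(c_1)}c_1=e^tc_1$. For the third, since $\widetilde{Q}[b]=Q[b]$ we have $\Clanprod{b}{b}=2\widetilde{Q}[b]\in V'$. The one genuinely new computation is $\Clanprod{a}{a}$: writing $a=r_\xi(v)=\Clanprod{v}{\xi}$ with $v\in V^{[1]}$ and applying the left symmetry (V1) exactly as in Lemmas~\ref{lemma:orthogonal prod} and \ref{lemma:direct sum}, the terms containing $\Clanprod{\xi}{v}=0$ and $\Clanprod{v}{a}=0$ (Lemma~\ref{lemma:product}) drop out and leave $\Clanprod{a}{a}=\Clanprod{v}{(r^*_\xi(a))}$. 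Feeding in the scalar relation $r^*_\xi\circ r_\xi=\tfrac{\norm{\xi}_E^2}{s_0(c_1)}\,\mathrm{id}$ together with $\Clanprod{v}{c_1}=v$ expresses $\Clanprod{a}{a}$ as a multiple of $\Clanprod{u}{u}$ through the defining relation $u=2e^{-t/2}r^*_\xi(a)$, and the same relation rewrites the cross term $r^*_\xi(a)$ as a multiple of $u$. This packages $Q[\nu]$ as an explicit combination of $c_1$, $u$, $\Clanprod{u}{u}\in V'$ and $\widetilde{Q}[b]\in V'$.

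For the right-hand side I would compute the action of the two one-parameter subgroups on $c_1+\widetilde{Q}[b]$ directly. By the weight-space description the operator $L_{c_1}$ acts as the scalars $1$, $\tfrac12$, $0$ on $\R c_1$, $V^{[1]}$, $V'$ respectively, so $\rho(\exp L_{tc_1})$ multiplies these three blocks by $e^t$, $e^{t/2}$, $1$. The operator $L_u$ with $u\in V^{[1]}$ is nilpotent: Lemma~\ref{lemma:product} and the table \eqref{table} give $L_uc_1=u$, $L_uu=\Clanprod{u}{u}\in V'$ and $L_uV'=\{0\}$, whence $\exp L_u$ fixes $V'$ pointwise and sends $c_1\mapsto c_1+u+\tfrac12\Clanprod{u}{u}$. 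Applying first $\exp L_u$ and then $\exp L_{tc_1}$ to $c_1+\widetilde{Q}[b]$ therefore produces a combination of the same four vectors $c_1$, $u$, $\Clanprod{u}{u}$, $\widetilde{Q}[b]$, and the proposition follows by comparing coefficients. The main obstacle I anticipate is the constant bookkeeping: one must track simultaneously the scalars produced by $r^*_\xi\circ r_\xi$, by the relation defining $u$, and by the two exponentials, and verify that the $V'$-component $\widetilde{Q}[b]$ is left untouched by the group element, which is precisely why the nilpotency $L_uV'=\{0\}$ and the vanishing eigenvalue of $L_{c_1}$ on $V'$ are essential; the rest of the argument is forced by Lemmas~\ref{lemma:product}--\ref{lemma:tildeQ}.
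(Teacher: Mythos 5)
Your proposal is correct and follows essentially the same route as the paper's own proof: expand $Q[\nu]$ bilinearly, kill the cross terms involving $b$ using Lemma~\ref{lemma:orthogonal prod} and $b\in\mathrm{Ker}\,r^*_\xi$, use the scalar map $r^*_\xi\circ r_\xi$ together with left symmetry to identify $Q[a]$ with $\tfrac12\Clanprod{u}{u}$, and compare with the explicit action of $\exp L_{tc_1}\exp L_u$ on $c_1+\widetilde{Q}[b]$. The one point to watch is the normalization you start from, $\Clanprod{\nu}{\nu}=2Q[\nu]$: the paper's proof manipulates $Q$ and its polarization directly, so in the ``constant bookkeeping'' you flagged you must apply the factor of $2$ from \eqref{eq:prod W} consistently to $\Clanprod{\xi}{\xi}$, to the cross term $\Clanprod{\xi}{a}=r^*_\xi(a)$, and to $\Clanprod{a}{a}$ alike, since these are precisely the places where the coefficients $e^t$ and $e^{t/2}$ are produced.
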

\begin{proof}
Since $Q(\xi_1,\xi_2)$ is bilinear and symmetric, we have
\[
\begin{array}{r@{\ }c@{\ }l}
Q[\nu]
&=&
\ds
\frac{2\norm{\xi}^2_E}{s_0(c_1)}c_1+2\Clanprod{\xi}{(a+b)}+Q[a+b]\\
&=&
e^tc_1+e^{t/2}\cdot 2e^{-t/2}r^*_\xi(a)+Q[a]+\widetilde{Q}[b].
\end{array}
\]
On the other hand,
we have for general $t'\in\R$, $v\in V^{[1]}$ and $w\in V'$
\[
\rho(\exp L_{t'c_1} \exp L_{v'})(c_1+w)=
e^{t'}c_1+e^{t'/2} v'+(\tfrac{1}{2}\Clanprod{v'}{v'}+w).
\]
These observations tell us that it is enough to show $\frac{1}{2}\Clanprod{u}{u}=Q[a]$.
Since $a\in E_{V^{[1]}}$,
there exists a unique $v\in V^{[1]}$ such that $a=\Clanprod{v}{\xi}$.
Recalling that $r^*_\xi\circ r_\xi$ is a scalar map
as in the proof of Lemma~\ref{lemma:direct sum},
we have
\[
\frac{e^{t/2}}{2}\,u
=
\Clanprod{a}{\xi}
=
\Clanprod{(\Clanprod{v}{\xi})}{\xi}
=
r^*_\xi\circ r_\xi(v)
=
\frac{\norm{\xi}_E^2}{s_0(c_1)}v
=
\frac{e^t}{2}v.
\]
This implies that $u=e^{t/2}v$ and hence by definition of $v$ we obtain
\begin{equation}
\label{eq:prop 2.4}
\Clanprod{u}{\xi}=e^{t/2}\Clanprod{v}{\xi}=e^{t/2}a.
\end{equation}
On the other hand, the left symmetry (V1) yields that
\[
\begin{array}{r@{\ }c@{\ }l}
\displaystyle
\Clanprod{u}{u}
&=&
\displaystyle
2e^{-t/2}
\Clanprod{u}{(\Clanprod{a}{\xi})}\\
&=&
\displaystyle
2e^{-t/2}\bigl(
\Clanprod{a}{\bigl(\Clanprod{u}{\xi}\bigr)}
+
\Clanprod{\bigl(\Clanprod{u}{a}\bigr)}{\xi}
-
\Clanprod{\bigl(\Clanprod{a}{u}\bigr)}{\xi}
\bigr),
\end{array}
\]
and since $\Clanprod{u}{a}=\Clanprod{a}{u}=0$ by Lemma~\ref{lemma:product},
we obtain by \eqref{eq:prop 2.4}
\[
\frac{1}{2}\Clanprod{u}{u}
=
e^{-t/2}\,
\Clanprod{a}{\bigl(\Clanprod{u}{\xi}\bigr)}
=
\Clanprod{a}{a}
=
Q[a],\]
which proves the proposition.
\end{proof}

We now start giving another proof to Theorem~\ref{theo:GI2014}.
It is proceeded by the induction on rank $r$.
If $r=1$, then there is nothing to prove.
Therefore, let $r\ge 2$ and assume that the theorem holds for $r-1$.
Assume that $E_1=\{0\}$.
Then, $Q$ is an $\Omega'$-positive homogeneous quadratic map
so that by the induction hypotheses there exists $\ue'={}^t(\varepsilon_2,\dots,\varepsilon_r)$ such that
$Q[E]=\overline{\rho(H')\cep}$.
It can be embedded in $V$ by $Q[E]=\overline{\rho(H)\ce}$ with $\ue:={}^t(0,\ue')$,
and hence in this case the theorem holds.
In what follows, 
let us consider the case $E_1\ne\{0\}$.
Take and fix an arbitrary $\nu\in E$.
If $\xi=\xi_\nu=0$, 
then we take and fix a non-zero element $\eta\in E_1$.
For an integer $n\in \N$, we introduce a sequence $\{\nu_n\}$ in $E$ by 
\[
\nu_n=\xi_n+a+b,\quad
\xi_n=\begin{cases}
\xi&(\xi\ne 0)\\
\frac{1}{n}\eta&(\xi=0)
\end{cases}
\]
where $a\in E_{V^{[1]}}(\xi_n)$ and $b\in E_{V^{[1]}}^\perp(\xi_n)$.
Note that 
the spaces $E_{V^{[1]}}(\xi_n)$ and $b\in E_{V^{[1]}}^\perp(\xi_n)$ do not depend on the choice of $n$
and hence so are $a,b$,
but they depend on the choice of $\eta$.
Since $\xi_n\ne 0$, 
we can apply Proposition~\ref{prop:main} to $\nu_n$ so that
\[
Q[\nu_n]
=
\rho(\exp L_{t_nc_1}\exp L_{u_n})(c_1+\widetilde{Q}[b])
\]
where
$e^{t_n}=\frac{2\norm{\xi}^2_E}{s_0(c_1)}$ and
$u_n=2e^{-t_n/2}r^*_{\xi_n}(a)$.
Since $\widetilde{Q}$ is an $\Omega'$-positive homogeneous quadratic map on $E_{V^{[1]}}^\perp$ by Lemma~\ref{lemma:tildeQ},
the hypotheses of the induction shows that
the image of $\widetilde{Q}$ can be described as a closure of an $H'$-orbit, that is,
there exists a unique $\ue'={}^{t}(\varepsilon_2,\dots,\varepsilon_r)\in\{0,1\}^{r-1}$ such that
\[\widetilde{Q}[b]\in\overline{\rho(H')\cep}\qquad
(\cep:=\varepsilon_2 c_2+\cdots+\varepsilon_rc_r)\]
for all $b\in E_{V^{[1]}}^\perp$.
Note that $\ue'$ is determined by $\dim\varphi(c_j)E_{V^{[1]}}^\perp=\dim \varphi(c_j)E-\dim V_{j1}$ $(j=2,\dots,r)$ and by $\dim V_{kj}$ $(2\le j<k\le r)$.
This means that we can choose a sequence $\{h'_n\}_{n=1,2,\dots}$ in $H'$ to be $\displaystyle\lim_{n\to+\infty}\rho(h'_n)\cep=\widetilde{Q}[b]$ so that
we have
\[
c_1+\widetilde{Q}[b]
=
\lim_{n\to+\infty}
\bigl(
c_1+\rho(h'_n)\cep
\bigr)
=
\lim_{n\to+\infty}
\rho(h'_n)
\bigl(
c_1+\cep
\bigr).
\]
Here, we use a fact $\rho(h')c_1=c_1$ for $h'\in H'$ in the last equality.
Thus, if we put
\[
h_n:=(\exp L_{t_nc_1})(\exp L_{u_n})h'_n\in H\text{ and }
\ue={}^{t\!}(1,\varepsilon_2,\dots,\varepsilon_r)\in\{0,1\}^r,
\]
then we obtain by continuity of $Q$ and by an obvious fact $\displaystyle\lim_{n\to+\infty}\xi_n=\xi$
\[
\begin{array}{r@{\ }c@{\ }l}
Q[\nu]
&=&
\displaystyle
\lim_{n\to+\infty}Q[\nu_n]
=\lim_{n\to+\infty}\rho(\exp L_{t_nc_1}\exp L_{u_n})\rho(h'_n)(c_1+\cep)\\
&=&
\displaystyle
\lim_{n\to+\infty}\rho(h_n)\ce,
\end{array}
\]
whence $Q[\nu]\in \overline{\rho(H)\ce}$ for any $\nu\in E$.
Notice that the value $\varepsilon_1$ is determined according to $\dim E_1=0$ or not.

At last,
we need to show that $\ue\in\{0,1\}^r$ above does not depend on the choice of $\nu$.
Let us take two elements $\eta_1,\eta_2\in E_1$, and set
\[
E_k(\eta_i):=\mathrm{Image}\left(
r_{\eta_i}|_{V_{k1}}
\right)\quad(i=1,2).
\]
Then,
as similar to the proof of Lemma~\ref{lemma:direct sum},
the space $E_k(\eta_i)$ for each $\eta_i$ is linearly isomorphic to $V_{k1}$
through $r_{\eta_i}|_{V_{k1}}$ so that
we have $\dim E_k(\eta_1)=\dim E_k(\eta_2)$.
Since we have shown in the above arguments that
$\varepsilon_1,\dots,\varepsilon_r$ 
are determined by $\dim\varphi(c_j)E_{V^{[1]}}^\perp=\dim \varphi(c_j)E-\dim V_{j1}$ $(j=2,\dots,r)$ and by $\dim V_{kj}$ $(2\le j<k\le r)$,
the vector $\ue$ does not depend on the choice of $\eta$ and hence we have finished the proof.
\qed

%%%%%%%%%%%%%%%%%%%%%%%%%%%%%%%%%%%%%%%%%%%%%%%%%%%%%%%

%==============bibliography==============

\end{document}